\newtheorem{theorem}{Theorem}
\theoremstyle{plain}
\newtheorem{proposition}{Proposition}
\numberwithin{equation}{section}
\begin{document}
\title[Some results on a cross-section]{Some results on a cross-section\\
in the tensor bundle}
\author{A. GEZER}
\address{Ataturk University, Faculty of Science, Department of Mathematics,
25240, Erzurum-Turkey.}
\email{agezer@atauni.edu.tr}
\author{M. ALTUNBAS}
\address{Erzincan University, Faculty of Science and Art, Department of
Mathematics, 24030, Erzincan-Turkey.}
\email{maltunbas@erzincan.edu.tr}
\subjclass[2000]{Primary 53C15; Secondary 53B05.}
\keywords{Almost complex structure, almost analytic tensor, complete lift,
connection, tensor bundle.}

\begin{abstract}
The present paper is devoted to some results concerning with the complete
lifts of an almost complex structure and a connection in a manifold to its
(0,q)-tensor bundle along the corresponding cross-section.
\end{abstract}

\maketitle

\section{\protect\bigskip \textbf{Introduction}}

The behaviour of the lifts of tensor fields and connections on a manifold to
its different bundles along the corresponding cross-sections are studied by
several authors. For the case tangent and cotangent bundles, see \cite%
{YanoIshihara:DiffGeo,Yano,Yano2} and also tangent bundles of order $2$ and
order $r$, see \cite{Tani,Houh}. In \cite{Gezer2}, the first author and his
collaborator studied the complete lift of an almost complex structure in a
manifold on the so-called pure cross-section of its $(p,q)$-tensor bundle by
means of the Tachibana operator (for diagonal lift to the $(p,q)$-tensor
bundle see \cite{Gezer1} and for the $(0,q)$-tensor bundle see \cite{Magden}%
). Moreover they proved that if a manifold admits an almost complex
structure, then so does on the pure cross-section of its $(p,q)$- tensor
bundle provided that the almost complex structure is integrable. In \cite%
{Magden1}, the authors give detailed description of geodesics of the $(p,q)$%
- tensor bundle with respect to the complete lift of an affine connection.

The purpose of the present paper is two-fold. Firstly, to show the complete
lift of an almost complex structure in a manifold to its $(0,q)$-tensor
bundle along the corresponding cross-section, when restricted to the
cross-section determined by an almost analytic tensor field, is an almost
complex structure. Finally, to study the behaviour of the complete lift of a
connection on the cross-section of the $(0,q)$-tensor bundle.

Throughout this paper, all manifolds, tensor fields and connections are
always assumed to be differentiable of class $C^{\infty }$. Also, we denote
by $\Im _{q}^{p}(M)$ the set of all tensor fields of type $(p,q)$ on $M$,
and by $\Im _{q}^{p}(T_{q}^{0}(M))$ the corresponding set on the $(0,q)$%
-tensor bundle $T_{q}^{0}(M)$. The Einstein summation convention is used,
the range of the indices $i,j,s$ being always $\{1,2,...,n\}$.

\section{Preliminaries}

Let $M$ be a differentiable manifold of class $C^{\infty }$ and finite
dimension $n$. Then the set $T_{q}^{0}(M)=\bigcup_{P\in M}T_{q}^{0}(P)$, $%
q>0,$ is the tensor bundle of type $(0,q)$ over $M$, where $\bigcup $
denotes the disjoint union of the tensor spaces $T_{q}^{0}(P)$ for all $P\in
M$. For any point $\tilde{P}$ of $T_{q}^{0}(M)$ such that $\tilde{P}\in
T_{q}^{0}(M)$, the surjective correspondence $\tilde{P}\rightarrow P$
determines the natural projection $\pi :T_{q}^{0}(M)\rightarrow M$. The
projection $\pi $ defines the natural differentiable manifold structure of $%
T_{q}^{0}(M)$, that is, $T_{q}^{0}(M)$ is a $C^{\infty }$-manifold of
dimension $n+n^{q}$. If $x^{j}$ are local coordinates in a neighborhood $U$
of $P\in M$, then a tensor $t$ at $P$ which is an element of $T_{q}^{0}(M)$
is expressible in the form $(x^{j},t_{j_{1}...j_{q}})$, where $%
t_{j_{1}...j_{q}}$ are components of $t$ with respect to natural base. We
may consider $(x^{j},t_{j_{1}...j_{q}})=(x^{j},x^{\bar{j}})=x^{J}$, $%
j=1,...,n$, $\bar{j}=n+1,...,n+n^{q}$, $J=1,...,n+n^{p+q}$ as local
coordinates in a neighborhood $\pi ^{-1}(U)$.

Let $V=V^{i}\frac{\partial }{\partial x^{i}}$ and $A={A_{j_{1}...j_{q}}}%
dx^{j_{1}}\otimes \cdots \otimes dx^{j_{q}}$ be the local expressions in $U$
of a vector field $V$ \ and a $(0,q)-$tensor field $A$ on $M$, respectively.
Then the vertical lift $^{V}A$ of $A$ and the complete lift $^{C}V$ of $V$
are given, with respect to the induced coordinates, by\noindent 

\begin{equation}
{}^{V}A=\left( 
\begin{array}{c}
{0} \\ 
{A_{j_{1}...j_{q}}}%
\end{array}%
\right)  \label{G2.1}
\end{equation}%
and

\begin{equation}
{}^{C}V=\left( 
\begin{array}{c}
{V^{j}} \\ 
-\dsum\limits_{\lambda =1}^{q}{t_{j_{1}...m...j_{q}}\partial _{j_{\lambda
}}V^{m}}%
\end{array}%
\right) .  \label{G2.2}
\end{equation}

Suppose that there is given a tensor field $\xi \in \Im _{q}^{0}(M)$. Then
the correspondence $x\mapsto \xi _{x},$ $\xi _{x}$ being the value of $\xi $
at $x\in M$, determines a mapping $\sigma _{\xi }:M\mapsto T_{q}^{0}(M),$
such that $\pi \circ \sigma _{\xi }=id_{M},$ and the $n$ dimensional
submanifold $\sigma _{\xi }(M)$ of $T_{q}^{0}(M)$ is called the
cross-section determined by $\xi $. If the tensor field $\xi $ has the local
components $\xi _{k_{1}\cdots k_{q}}(x^{k})$, the cross-section $\sigma
_{\xi }(M)$ is locally expressed by%
\begin{equation}
\left\{ 
\begin{array}{l}
x^{k}=x^{k}, \\ 
x^{\overline{k}}=\xi _{k_{1}\cdots k_{q}}(x^{k})%
\end{array}%
\right.  \label{G2.3}
\end{equation}%
with respect to the coordinates $(x^{k},x^{\overline{k}})$ in $T_{q}^{0}(M)$%
. Differentiating (\ref{G2.3}) by $x^{j}$, we see that $n$ tangent vector
fields $B_{j}$ to $\sigma _{\xi }(M)$ have components\hspace{5mm}%
\begin{equation}
(B_{j}^{K})=(\dfrac{\partial x^{K}}{\partial x^{j}})=\left( 
\begin{array}{c}
\delta _{j}^{k} \\ 
\partial _{j}\xi _{k_{1}\cdots k_{q}}%
\end{array}%
\right)  \label{G2.4}
\end{equation}%
with respect to the natural frame $\left\{ {\partial _{k},\partial _{%
\overline{k}}}\right\} $ in $T_{q}^{0}(M)$.

On the other hand, the fibre is locally expressed by%
\begin{equation*}
\left\{ 
\begin{array}{l}
x^{k}=const., \\ 
t_{k_{1}\cdots k_{q}}=t_{k_{1}\cdots k_{q}},%
\end{array}%
\right.
\end{equation*}%
$t_{k_{1}\cdots k_{q}}$ being considered as parameters. Thus, on
differentiating with respect to $x^{\overline{j}}=t_{j_{1}\cdots j_{q}},$ we
see that $n^{q}$ tangent vector fields $C_{\overline{j}}$ to the fibre have
components \hspace{5mm}%
\begin{equation}
(C_{\overline{j}}^{K})=(\dfrac{\partial x^{K}}{\partial x^{\overline{j}}}%
)=\left( 
\begin{array}{c}
0 \\ 
\delta _{k_{1}}^{j_{1}}\cdots \delta _{k_{q}}^{j_{q}}%
\end{array}%
\right)  \label{G2.5}
\end{equation}%
with respect to the natural frame $\left\{ {\partial _{k},\partial _{%
\overline{k}}}\right\} $ in $T_{q}^{0}(M)$.

We consider in $\pi ^{-1}(U)\subset T_{q}^{0}(M),$\ $n+n^{q\text{ }}$local
vector fields $B_{j}$ and $C_{\overline{j}}$ along $\sigma _{\xi }(M)$. They
form a local family of frames $\left[ {B_{j},C_{\overline{j}}}\right] $
along $\sigma _{\xi }(M)$, which is called the adapted $(B,C)-$frame of $%
\sigma _{\xi }(M)$ in $\pi ^{-1}(U)$. Taking account of (\ref{G2.2}) on the
cross-section $\sigma _{\xi }(M)$, and also (\ref{G2.4}) and (\ref{G2.5}),
we can easily prove that, the complete lift $^{C}V$ has along $\sigma _{\xi
}(M)$ components of the form%
\begin{equation}
^{C}V=\left( 
\begin{array}{c}
V^{j} \\ 
-L_{V}\xi _{j_{1}\cdots j_{q}}%
\end{array}%
\right)  \label{G2.6}
\end{equation}%
with respect to the adapted $(B,C)$-frame. From (\ref{G2.1}), (\ref{G2.4})
and (\ref{G2.5}), the vertical lift ${}^{V}A$ also has components of the form

\begin{equation}
{}^{V}A=\left( 
\begin{array}{c}
{0} \\ 
{A_{j_{1}...j_{q}}}%
\end{array}%
\right)  \label{G2.7}
\end{equation}%
with respect to the adapted $(B,C)$- frame.

\section{Almost complex structures on a pure cross-section in the $(0,q)$%
-tensor bundle}

\bigskip A tensor field $\xi \in \Im _{q}^{0}(M)$ is called pure with
respect to $\varphi \in \Im _{1}^{1}(M)$, if \cite%
{Gezer2,Koto,Magden,Muto,Gezer,Salimov,Tachibana,YanoAko:1968}:%
\begin{equation}
\varphi _{j_{1}}^{r}\xi _{r\cdots j_{q}}=\cdots =\varphi _{j_{q}}^{r}\xi
_{j_{1}\cdots r}=\overset{\ast }{\xi }_{j_{1}\cdots j_{q}}.  \label{G3.1}
\end{equation}%
In particular, vector and covector fields will be considered to be pure.

Let $\overset{\ast }{\Im }$\ $_{q}^{0}(M)$ denotes a module of all the
tensor fields $\xi \in \Im _{q}^{0}(M)$ which are pure with respect to $%
\varphi $. Now, we consider a pure cross-section $\sigma _{\xi }^{\varphi
}(M)$ determined by $\xi \in \overset{\ast }{\Im }\ _{q}^{0}(M)$. The
complete lift $^{C}\varphi $ of $\varphi $ along the pure cross-section $%
\sigma _{\xi }^{\varphi }(M)$ to $T_{q}^{0}(M)$ has local components of the
form

\begin{equation*}
^{C}\varphi =\left( 
\begin{array}{cc}
\varphi _{l}^{k} & 0 \\ 
-(\Phi _{\varphi }\xi )_{lk_{1}...k_{q}} & \varphi _{k_{1}}^{r_{1}}\delta
_{k_{2}}^{r_{2}}...\delta _{k_{q}}^{r_{q}}%
\end{array}%
\right)
\end{equation*}%
with respect to the adapted $(B,C)-$frame of $\sigma _{\xi }^{\varphi }(M)$,
where $(\Phi _{\varphi }\xi )_{lk_{1}\cdots k_{q}}=\varphi _{l}^{m}\partial
_{m}\xi _{k_{1}\cdots k_{q}}-\partial _{l}\overset{\ast }{\xi }_{k_{1}\cdots
k_{q}}+\sum\limits_{a=1}^{q}(\partial _{k_{a}}\varphi _{l}^{m})\xi
_{k_{1}\cdots m\cdots k_{q}}$ is the Tachibana operator.

We consider that the local vector fields \hspace{5mm}%
\begin{equation*}
^{C}X_{(i)}=^{C}(\dfrac{\partial }{\partial x^{i}})=^{C}(\delta _{i}^{h}%
\dfrac{\partial }{\partial x^{h}})=\left( 
\begin{array}{c}
\delta _{i}^{h} \\ 
0%
\end{array}%
\right)
\end{equation*}%
and%
\begin{equation*}
^{V}X^{(\overset{-}{i})}=^{V}(dx^{i_{1}}\otimes \cdots \otimes
dx^{i_{q}})=^{V}(\delta _{h_{1}}^{i_{1}}\cdots \delta
_{h_{q}}^{i_{q}}dx^{h_{1}}\otimes \cdots \otimes dx^{h_{q}})=\left( 
\begin{array}{c}
0 \\ 
\delta _{h_{1}}^{i_{1}}\cdots \delta _{h_{q}}^{i_{q}}%
\end{array}%
\right)
\end{equation*}%
$i=1,...,n,\overline{i}=n+1,...,n+n^{q}$ span the module of vector fields in 
$\pi ^{-1}(U)$. Hence, any tensor fields is determined in $\pi ^{-1}(U)$ by
their actions on $^{C}V$ and $^{V}A$ for any $V\in \Im _{0}^{1}(M)$ and $%
A\in \Im _{q}^{0}(M)$. The complete lift $^{C}\varphi $ along the pure
cross-section $\sigma _{\xi }^{\varphi }(M)$ has the properties%
\begin{equation}
\left\{ 
\begin{array}{l}
^{C}\varphi (^{C}V)=^{C}(\varphi (V))+^{V}((L_{V}\varphi )\circ \xi
),\forall V\in \Im _{0}^{1}(M),(i) \\ 
^{C}\varphi (^{V}A)=^{V}(\varphi (A)),\forall A\in \Im _{q}^{0}(M),\quad (ii)%
\end{array}%
\right.  \label{G3.2}
\end{equation}%
which characterize $^{C}\varphi $, where $\varphi (A)\in \Im _{q}^{0}(M)$.
Remark that $^{V}((L_{V}\varphi )\circ \xi )$ is a vector field on $%
T_{q}^{0}(M)$ and locally expressed by 
\begin{equation*}
^{V}((L_{V}\varphi )\circ \xi )=\left( 
\begin{array}{c}
0 \\ 
(L_{V}\varphi )_{i_{1}}^{j}\xi _{ji_{2}\cdots i_{q}}%
\end{array}%
\right)
\end{equation*}%
with respect to the adapted $(B,C)$-frame, where $\xi _{i_{1}\cdots i_{q}}$
are local components of $\xi $ in $M$ \cite{Magden}.

\begin{theorem}
\ Let $M$ be an almost complex manifold with an almost complex structure $%
\varphi $. Then, the complete lift $^{C}\varphi \in \Im
_{1}^{1}(T_{q}^{0}(M)),$when restricted to the pure cross-section determined
by an almost analytic tensor $\xi $ on $M$, is an almost complex structure.
\end{theorem}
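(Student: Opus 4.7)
\smallskip
\noindent\emph{Proof plan.} The plan is to verify $({}^{C}\varphi)^{2}=-\mathrm{Id}$ pointwise on $\sigma_{\xi}^{\varphi}(M)$ by computing $({}^{C}\varphi)^{2}$ as a $2\times 2$-block matrix in the adapted $(B,C)$-frame, starting from the explicit block form of ${}^{C}\varphi$ recalled just before the theorem.

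Three of the four resulting blocks do not involve $\xi$ at all. The top-right block of $({}^{C}\varphi)^{2}$ vanishes because ${}^{C}\varphi$ already has a zero in its top-right. The top-left block reduces immediately to $\varphi\circ\varphi=-\mathrm{Id}_{M}$. The bottom-right block is the square of $\varphi_{k_{1}}^{r_{1}}\delta_{k_{2}}^{r_{2}}\cdots\delta_{k_{q}}^{r_{q}}$; once the Kronecker deltas contract away the trailing $q-1$ slots this collapses to $\varphi^{2}$ acting in the first slot tensored with the identity in the others, and so is again $-\mathrm{Id}$. These three blocks therefore agree with the corresponding blocks of $-\mathrm{Id}$ regardless of which pure cross-section we choose.

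All the content of the theorem is therefore concentrated in the bottom-left block of $({}^{C}\varphi)^{2}$. Computing it produces two contributions, one from multiplying the top-left $\varphi_{l}^{k}$ of the first factor by the bottom-left $-(\Phi_{\varphi}\xi)_{lk_{1}\cdots k_{q}}$ of the second, and another from multiplying the bottom-left of the first factor by the bottom-right $\varphi_{k_{1}}^{r_{1}}\delta_{k_{2}}^{r_{2}}\cdots\delta_{k_{q}}^{r_{q}}$ of the second. Both contributions are linear in $\Phi_{\varphi}\xi$. At this stage I would invoke the standing hypothesis that $\xi$ is almost analytic with respect to $\varphi$, which is precisely the condition $\Phi_{\varphi}\xi=0$ on the Tachibana operator introduced just above the theorem; both terms then vanish identically on $\sigma_{\xi}^{\varphi}(M)$, and the verification of $({}^{C}\varphi)^{2}=-\mathrm{Id}$ is complete.

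The main technical obstacle is the careful index bookkeeping when multiplying the bottom-left of ${}^{C}\varphi$ against the bottom-right block, since the latter carries a hidden $q$-fold slot structure. Purity of $\xi$ is exactly the ingredient that ensures the two resulting terms recombine as copies of the Tachibana factor, so that the single hypothesis $\Phi_{\varphi}\xi=0$ is strong enough to close the argument. A more indirect route, via the characterizing identities \eqref{G3.2} applied to ${}^{C}V$ and ${}^{V}A$, would also succeed, but it would require first isolating a Nijenhuis-type combination $L_{\varphi V}\varphi+\varphi\circ L_{V}\varphi$ acting on $\xi$ and then rewriting it back in terms of $\Phi_{\varphi}\xi$, which is a less direct path to the same conclusion.
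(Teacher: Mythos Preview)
Your block-matrix computation is correct and reaches the conclusion more directly than the paper does. The paper instead takes precisely the route you set aside at the end: it applies the characterizing identities \eqref{G3.2} to ${}^{C}V$ and ${}^{V}A$, obtaining
\[
({}^{C}\varphi)^{2}({}^{C}V)={}^{C}(\varphi^{2})({}^{C}V)+{}^{V}(N_{\varphi}\circ\xi)({}^{C}V),\qquad
({}^{C}\varphi)^{2}({}^{V}A)={}^{C}(\varphi^{2})({}^{V}A),
\]
with the Nijenhuis tensor $N_{\varphi}$ appearing as the obstruction (the relevant combination is $L_{\varphi V}\varphi-\varphi\circ L_{V}\varphi$, with a minus sign, not the $+$ you wrote). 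It then cites the literature for the implication ``$\xi$ almost analytic $\Rightarrow N_{\varphi}\circ\xi=0$'' to kill that term. Your argument is more self-contained: since the off-diagonal block of ${}^{C}\varphi$ in the adapted frame is already $-\Phi_{\varphi}\xi$, each of the two bottom-left contributions in the square carries a $\Phi_{\varphi}\xi$ factor and vanishes immediately under the hypothesis, with no detour through $N_{\varphi}$ and no external citation. The paper's route buys a conceptual link to integrability; yours buys economy. One small over-caution on your side: no genuine ``recombination'' is needed in the bottom-left step, since each of the products $BA$ and $CB$ separately contains a factor $\Phi_{\varphi}\xi$; purity of $\xi$ is already absorbed into the adapted-frame expression for ${}^{C}\varphi$ and plays no further role in the squaring.
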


\begin{proof}
If $V\in \Im _{0}^{1}(M)$ and $A\in \Im _{q}^{0}(M)$, in view of the
equations $(i)$ and $(ii)$ of (\ref{G3.2}), we have%
\begin{equation}
(^{C}\varphi )^{2}(^{C}V)=^{C}(\varphi ^{2})(^{C}V)+^{V}(N_{\varphi }\circ
\xi )(^{C}V)  \label{G3.3}
\end{equation}%
and%
\begin{equation}
(^{C}\varphi )^{2}(^{V}A)=^{C}(\varphi ^{2})(^{V}A),  \label{G3.4}
\end{equation}%
where $N_{\varphi ,X}(Y)=(L_{\varphi X}\varphi -\varphi (L_{X}\varphi ))(Y)=%
\left[ \varphi X,\varphi Y\right] -\varphi \left[ X,\varphi Y\right]
-\varphi \left[ \varphi X,Y\right] +\varphi ^{2}\left[ X,Y\right]
=N_{\varphi }(X,Y)$ is nothing but the Nijenhuis tensor constructed by $%
\varphi $.

Let $\varphi \in \Im _{1}^{1}(M)$ be an almost complex structure and $\xi
\in \Im _{q}^{0}(M)$ be a pure tensor with respect to $\varphi .$ If $(\Phi
_{\varphi }\xi )=0$, the pure tensor $\xi $ is called an almost analytic $%
(0,q)-$tensor. In \cite{Salimov,Muto,Koto}, it is proved that $\xi \circ
\varphi \in \Im _{q}^{0}(M)$ is an almost analytic tensor if and only if $%
\xi \in \Im _{q}^{0}(M)$ is an almost analytic tensor. Moreover if $\xi \in
\Im _{q}^{0}(M)$ is an almost analytic tensor, then $N_{\varphi }\circ \xi
=0 $. When restricted to the pure cross-section determined by an almost
analytic tensor $\xi $ on $M$, from (\ref{G3.3}), (\ref{G3.4}) and linearity
of the complete lift, we have

\begin{equation*}
(^{C}\varphi )^{2}=\text{ }^{C}(\varphi ^{2})=\text{ }%
^{C}(-I_{M})=-I_{T_{q}^{0}(M)}.
\end{equation*}%
This completes the proof.
\end{proof}

\section{Complete lift of a symmetric affine connection on a cross-section
in the $(0,q)$-tensor bundle}

We now assume that $\nabla $ is an affine connection (with zero torsion) on $%
M$. Let $\Gamma _{ij}^{h}$\ be components of $\nabla .$ The complete lift $%
^{C}\nabla $ of $\nabla $ to $T_{q}^{0}(M)$ has components $^{C}\Gamma
_{MS}^{I}$ such that%
\begin{eqnarray}
^{C}\Gamma _{ms}^{i} &=&\Gamma _{ms}^{i},\text{ }^{C}\Gamma _{\overline{m}%
s}^{i}=^{C}\Gamma _{m\overline{s}}^{i}=^{C}\Gamma _{\overline{m}\overline{s}%
}^{i}=^{C}\Gamma _{\overline{m}\overline{s}}^{\overline{i}}=0,  \label{G4.1}
\\
^{C}\Gamma _{m\overline{s}}^{\overline{i}} &=&-\dsum\limits_{c=1}^{q}\Gamma
_{mi_{c}}^{s_{c}}\delta _{i_{1}}^{s_{1}}...\delta _{i_{c-1}}^{s_{c-1}}\delta
_{i_{c+1}}^{s_{c+1}}...\delta _{i_{q}}^{s_{q}},  \notag \\
^{C}\Gamma _{\overline{m}s}^{\overline{i}} &=&-\dsum\limits_{c=1}^{q}\Gamma
_{si_{c}}^{m_{c}}\delta _{i_{1}}^{m_{1}}...\delta _{i_{c-1}}^{m_{c-1}}\delta
_{i_{c+1}}^{m_{c+1}}...\delta _{i_{q}}^{m_{q}},  \notag \\
^{C}\Gamma _{ms}^{\overline{i}} &=&\dsum\limits_{c=1}^{q}(-\partial
_{m}\Gamma _{si_{c}}^{a}+\Gamma _{mi_{c}}^{r}\Gamma _{sr}^{a}+\Gamma
_{ms}^{r}\Gamma _{ri_{c}}^{a})t_{i_{1}...i_{c-1}ai_{c+1}...i_{q}}  \notag \\
&&+\frac{1}{2}\dsum\limits_{b=1}^{q}\dsum\limits_{c=1}^{q}(\Gamma
_{mi_{c}}^{l}\Gamma _{si_{b}}^{r}+\Gamma _{mi_{b}}^{l}\Gamma
_{si_{c}}^{r})t_{i_{1}...i_{b-1}ri_{b+1}...i_{c-1}li_{c+1}...i_{q}}  \notag
\\
&&+\sum_{d=1}^{q}t_{i_{1}...l...i_{q}}R_{i_{d}km}^{\text{ \ \ \ \ \ \ }l} 
\notag
\end{eqnarray}%
with respect to the natural frame in $T_{q}^{0}(M)$, where $\delta _{j}^{i}-$%
Kronecker delta and $R_{ikm}^{\text{ \ \ \ \ \ \ }l}$ is components of the
curvature tensor $R$ of $\nabla $ \cite{Magden1}.

We now study the affine connection induced from $^{C}\nabla $ on the
cross-section $\sigma _{\xi }(M)$ determined by the $(0,q)-$tensor field $%
\xi $ in $M$ with respect to the adapted $(B,C)$-frame of $\sigma _{\xi
}(M). $ The vector fields $C_{\overline{j}}$ given by (\ref{G2.5}) are
linearly independent and not tangent to $\sigma _{\xi }(M)$. We take the
vector fields $C_{\overline{j}}$ as normals to the cross-section $\sigma
_{\xi }(M)$ and define an affine connection $\widetilde{\nabla }$ induced on
the cross-section. The affine connection $\widetilde{\nabla }$ induced $%
\sigma _{\xi }(M)$ from the complete lift $^{C}\nabla $ of a symmetric
affine connection $\nabla $ in $M$ has components of the form%
\begin{equation}
\widetilde{\Gamma }_{ji}^{h}=(\partial _{j}B_{i}^{\text{ \ \ }A}+^{C}\Gamma
_{CB}^{A}B_{j}^{\text{ \ }C}B_{i}^{\text{ \ \ }B})B_{\text{ \ }A}^{h},
\label{G4.2}
\end{equation}%
where $B_{\text{ \ }A}^{h}$ are defined by%
\begin{equation*}
(B_{\text{ \ }A}^{h},C_{\text{ \ }A}^{h})=(B_{i}^{\text{ \ \ }A},C_{i}^{%
\text{ \ \ }A})^{-1}
\end{equation*}%
and thus%
\begin{equation}
B_{\text{ \ }A}^{h}=(\delta _{i}^{h},0)\text{, \ \ }C_{\text{ \ }%
A}^{h}=(-\partial _{j}\xi _{k_{1}...k_{q}},\delta _{k_{1}}^{j_{1}}...\delta
_{k_{q}}^{j_{q}}).  \label{G4.3}
\end{equation}

Substituting (\ref{G4.1}), (\ref{G2.4}), (\ref{G2.5}) and (\ref{G4.3}) in (%
\ref{G4.2}), we get%
\begin{equation*}
\widetilde{\Gamma }_{ji}^{h}=\Gamma _{ji}^{h},
\end{equation*}%
where $\Gamma _{ji}^{h}$ are components of $\nabla $ in $M.$

From (\ref{G4.2}), we see that the quantity%
\begin{equation}
\partial _{j}B_{i}^{\text{ \ \ }A}+^{C}\Gamma _{CB}^{A}B_{j}^{\text{ \ }%
C}B_{i}^{\text{ \ \ }B}-\Gamma _{ji}^{h}B_{h}^{\text{ \ \ }A}  \label{G4.4}
\end{equation}%
is a linear combination of the vectors $C_{\overline{i}}^{\text{ \ \ }A}$.
To find the coefficients, we put $A=\overline{h}$ in (\ref{G4.4}) and find%
\begin{equation*}
\nabla _{j}\nabla _{i}\xi _{h_{1}...h_{q}}+\dsum\limits_{\lambda =1}^{q}\xi
_{h_{1}...l...h_{q}}R_{h_{\lambda }ij}^{\text{ \ \ \ \ \ \ }l}.
\end{equation*}%
Hence, representing (\ref{G4.4}) by $\widetilde{\nabla }_{j}B_{i}^{\text{ \
\ }A}$, we obtain%
\begin{equation}
\widetilde{\nabla }_{j}B_{i}^{\text{ \ \ }A}=(\nabla _{j}\nabla _{i}\xi
_{h_{1}...h_{q}}+\dsum\limits_{\lambda =1}^{q}\xi
_{h_{1}...l...h_{q}}R_{h_{\lambda }ij}^{\text{ \ \ \ \ \ \ }l})C_{\overline{h%
}}^{\text{ \ \ \ }A}.  \label{G4.5}
\end{equation}%
The last equation is nothing but the equation of Gauss for the cross-section 
$\sigma _{\xi }(M)$ determined by $\xi _{h_{1}...h_{q}}$. Hence, we have the
following proposition.

\begin{proposition}
The cross-section $\sigma _{\xi }(M)$ in $T_{q}^{0}(M)$\ determined by a $%
(0,q)$ tensor $\xi $ in $M$ with symmetric affine connection $\nabla $ is
totally geodesic if and only if $\xi $ satisfies%
\begin{equation*}
\nabla _{j}\nabla _{i}\xi _{h_{1}...h_{q}}+\dsum\limits_{\lambda =1}^{q}\xi
_{h_{1}...l...h_{q}}R_{h_{\lambda }ij}^{\text{ \ \ \ \ \ \ }l}=0.
\end{equation*}
\end{proposition}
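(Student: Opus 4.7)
The statement is essentially a direct reading of the Gauss equation (\ref{G4.5}) already derived in the preceding paragraph, so the plan is to interpret that equation geometrically and translate the totally geodesic condition into the vanishing of the appropriate coefficient.

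First I would recall the general principle: a submanifold $N$ of a manifold equipped with an affine connection is totally geodesic if and only if its second fundamental form (the van der Waerden--Bortolotti normal part of $\widetilde{\nabla}_X Y$ for tangent $X,Y$) vanishes identically. In the present setup, the tangent bundle to $\sigma_\xi(M)$ is spanned by the $B_i$ and the chosen complementary (normal) distribution is spanned by the $C_{\overline{j}}$, so the second fundamental form is exactly the $C_{\overline{h}}$-component of $\widetilde{\nabla}_j B_i$ as expressed in the adapted $(B,C)$-frame.

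Next I would invoke (\ref{G4.5}) directly: the displayed formula exhibits $\widetilde{\nabla}_j B_i^{\ A}$ as a pure $C_{\overline{h}}^{\ A}$-combination with coefficient
\begin{equation*}
\nabla_j \nabla_i \xi_{h_1 \cdots h_q} + \sum_{\lambda=1}^{q} \xi_{h_1 \cdots l \cdots h_q} R_{h_\lambda ij}^{\ \ \ \ \ \ l}.
\end{equation*}
Since the $C_{\overline{h}}$ are linearly independent and transverse to $\sigma_\xi(M)$, the second fundamental form vanishes if and only if this coefficient vanishes for all $i,j,h_1,\dots,h_q$. Combining this with the first step yields both implications of the proposition.

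I do not anticipate a genuine obstacle: all the computational content (the form of $^{C}\Gamma_{MS}^{I}$, the identification of $B_{\ A}^{h}$ and $C_{\ A}^{h}$, and the substitution producing (\ref{G4.5})) has already been carried out. The only point requiring a sentence of care is justifying that the $C_{\overline{h}}^{\ A}$ may legitimately be used as the normal frame with respect to which the second fundamental form is read off; this is immediate from (\ref{G2.5}) together with the observation that $(B_i^{\ A}, C_{\overline{i}}^{\ A})$ is invertible, so the decomposition of $\widetilde{\nabla}_j B_i$ into tangential and transverse parts is unique. The proof therefore reduces to stating this equivalence explicitly.
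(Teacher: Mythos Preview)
Your proposal is correct and follows essentially the same approach as the paper: the paper derives the Gauss equation (\ref{G4.5}) in the paragraph immediately preceding the proposition and then simply reads off the totally geodesic condition as the vanishing of the normal coefficient, exactly as you describe. Your version merely makes explicit the standard identification of that coefficient with the second fundamental form, which the paper leaves implicit in the phrase ``nothing but the equation of Gauss.''
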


Now, let us apply the operator $\widetilde{\nabla }_{k}$ to (\ref{G4.5}), we
have%
\begin{equation}
\widetilde{\nabla }_{k}\widetilde{\nabla }_{j}B_{i}^{\text{ \ \ }A}=\nabla
_{k}(\nabla _{j}\nabla _{i}\xi _{h_{1}...h_{q}}+\dsum\limits_{\lambda
=1}^{q}\xi _{h_{1}...l...h_{q}}R_{h_{\lambda }ij}^{\text{ \ \ \ \ \ \ }l})C_{%
\overline{h}}^{\text{ \ \ \ }A}.  \label{G4.6}
\end{equation}%
Recalling that%
\begin{equation*}
\widetilde{\nabla }_{k}\widetilde{\nabla }_{j}B_{i}^{\text{ \ \ }A}-%
\widetilde{\nabla }_{j}\widetilde{\nabla }_{k}B_{i}^{\text{ \ \ }A}=%
\widetilde{R}_{DCB}^{\text{ \ \ \ \ \ \ \ }A}B_{k}^{\text{ \ \ }D}B_{j}^{%
\text{ \ \ }C}B_{i}^{\text{ \ \ }B}-R_{kji}^{\text{ \ \ \ \ }h}B_{h}^{\text{
\ \ }A},
\end{equation*}%
and using the Ricci identity for a tensor field of type $(0,q)$, from (\ref%
{G4.6}) we get%
\begin{eqnarray*}
&&\widetilde{R}_{DCB}^{\text{ \ \ \ \ \ \ \ }A}B_{k}^{\text{ \ \ }D}B_{j}^{%
\text{ \ \ }C}B_{i}^{\text{ \ \ }B}-R_{kji}^{\text{ \ \ \ \ }h}B_{h}^{\text{
\ \ }A} \\
&=&[\dsum\limits_{\lambda =1}^{q}(\nabla _{k}R_{h_{\lambda }ij}^{\text{ \ \
\ \ \ }l}-\nabla _{j}R_{h_{\lambda }ik}^{\text{ \ \ \ \ \ }l})\xi
_{h_{1}...l...h_{q}}-R_{kji}^{\text{ \ \ \ \ }l}\nabla _{l}\xi
_{h_{1}...h_{q}} \\
&&-\dsum\limits_{\lambda =1}^{q}R_{kjh_{\lambda }}^{\text{ \ \ \ \ \ }%
l}\nabla _{i}\xi _{h_{1}...l...h_{q}}+\dsum\limits_{\lambda
=1}^{q}R_{h_{\lambda }ij}^{\text{ \ \ \ \ \ }l}\nabla _{k}\xi
_{h_{1}...l...h_{q}}-\dsum\limits_{\lambda =1}^{q}R_{h_{\lambda }ik}^{\text{
\ \ \ \ \ }l}\nabla _{j}\xi _{h_{1}...l...h_{q}}]C_{\overline{h}}^{\text{ \
\ }A}.
\end{eqnarray*}%
Thus we have the result below.

\begin{proposition}
$\widetilde{R}_{DCB}^{\text{ \ \ \ \ \ \ \ }A}B_{k}^{\text{ \ \ }D}B_{j}^{%
\text{ \ \ }C}B_{i}^{\text{ \ \ }B}$ is tangent to the cross-section $\sigma
_{\xi }(M)$ if and only if 
\begin{eqnarray*}
&&\dsum\limits_{\lambda =1}^{q}(\nabla _{k}R_{h_{\lambda }ij}^{\text{ \ \ \
\ \ }l}-\nabla _{j}R_{h_{\lambda }ik}^{\text{ \ \ \ \ \ }l})\xi
_{h_{1}...l...h_{q}} \\
&=&R_{kji}^{\text{ \ \ \ \ }l}\nabla _{l}\xi
_{h_{1}...h_{q}}+\dsum\limits_{\lambda =1}^{q}R_{kjh_{\lambda }}^{\text{ \ \
\ \ \ }l}\nabla _{i}\xi _{h_{1}...l...h_{q}}-\dsum\limits_{\lambda
=1}^{q}R_{h_{\lambda }ij}^{\text{ \ \ \ \ \ }l}\nabla _{k}\xi
_{h_{1}...l...h_{q}} \\
&&+\dsum\limits_{\lambda =1}^{q}R_{h_{\lambda }ik}^{\text{ \ \ \ \ \ }%
l}\nabla _{j}\xi _{h_{1}...l...h_{q}}.
\end{eqnarray*}
\end{proposition}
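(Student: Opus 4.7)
The plan is to read the proposition directly off the identity displayed immediately above its statement, which already isolates $\widetilde{R}_{DCB}^{\phantom{DCB}A}B_{k}^{D}B_{j}^{C}B_{i}^{B}$ in the $(B,C)$-frame. The only genuinely conceptual ingredient is a transverse/tangent decomposition of vectors in $T_{q}^{0}(M)$ along $\sigma_{\xi}(M)$.

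First I would record that, along $\sigma_{\xi}(M)$, the family $\{B_{i}^{A}\}$ is a frame for the tangent bundle of the cross-section by (\ref{G2.4}), whereas the vectors $\{C_{\overline{j}}^{A}\}$ from (\ref{G2.5}) were chosen at the start of Section~4 as the normals used to construct the induced connection $\widetilde{\nabla}$. Taken together, the family $\{B_{i}^{A},C_{\overline{j}}^{A}\}$ is a basis of $T_{q}^{0}(M)$ along $\sigma_{\xi}(M)$, so any vector admits a unique splitting into a tangential component lying in $\operatorname{span}\{B_{h}^{A}\}$ and a normal component lying in $\operatorname{span}\{C_{\overline{h}}^{A}\}$, and tangency to the cross-section is equivalent to vanishing of the normal component.

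Next I would apply this observation to the identity
$$\widetilde{R}_{DCB}^{\phantom{DCB}A}B_{k}^{D}B_{j}^{C}B_{i}^{B}-R_{kji}^{\phantom{kji}h}B_{h}^{A}=\bigl[\,\cdots\,\bigr]\,C_{\overline{h}}^{A}$$
obtained in the paragraph just before the proposition, where $[\,\cdots\,]$ stands for the bracketed expression containing the $\nabla R\!\cdot\!\xi$ and $R\!\cdot\!\nabla\xi$ contributions. Because $R_{kji}^{\phantom{kji}h}B_{h}^{A}$ is manifestly tangent to $\sigma_{\xi}(M)$, transferring it across the equation leaves the normal part unchanged. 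Hence $\widetilde{R}_{DCB}^{\phantom{DCB}A}B_{k}^{D}B_{j}^{C}B_{i}^{B}$ is tangent to $\sigma_{\xi}(M)$ if and only if the coefficient $[\,\cdots\,]$ vanishes identically in its free indices; rearranging that vanishing condition by moving the three $R\!\cdot\!\nabla\xi$ terms with minus signs to the right-hand side reproduces verbatim the identity in the proposition.

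The main obstacle is entirely bookkeeping rather than substance: one must confirm that the $C_{\overline{h}}^{A}$ are genuinely transverse to $\sigma_{\xi}(M)$, which is immediate from (\ref{G2.4})--(\ref{G2.5}) and the definition of $\widetilde{\nabla}$, and that the sign conventions in the preceding long display match those in the equality stated in the proposition. Both checks are straightforward given the explicit formulas already written in the excerpt, and no further computation is required.
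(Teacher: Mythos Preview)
Your proposal is correct and coincides with the paper's own argument: the paper derives the displayed identity expressing $\widetilde{R}_{DCB}^{\phantom{DCB}A}B_{k}^{D}B_{j}^{C}B_{i}^{B}-R_{kji}^{\phantom{kji}h}B_{h}^{A}$ as a $C_{\overline{h}}^{A}$-combination and then simply writes ``Thus we have the result below,'' which is exactly the tangent/normal splitting you spell out. There is no difference in approach.
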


\end{document}